 \theoremstyle{plain}
\newtheorem{thm}{Theorem}[section]
\theoremstyle{plain}
  \newtheorem{prop}[thm]{Proposition}
\theoremstyle{plain}
 \newtheorem{lemma}[thm]{Lemma}
\theoremstyle{plain}
\theoremstyle{plain}
\theoremstyle{definition}
\theoremstyle{definition}
 \theoremstyle{definition}
\theoremstyle{remark}
\newtheorem{rmk}[thm]{Remark}
\numberwithin{equation}{section}
\newcommand{\Z}{\mathbb{Z}}
\newcommand{\Q}{\mathbb{Q}}
\newcommand{\F}{\mathbb{F}}
\newcommand{\Hom}{\mathrm{Hom}}
\newcommand{\End}{\mathrm{End}}
\newcommand{\GL}{\mathrm{GL}}
\newcommand{\id}{\mathrm{id}}
\newcommand{\tld}[1]{\widetilde{#1}}
\newcommand{\soc}{\mathrm{soc}}
\newcommand{\ovl}[1]{\overline{#1}}
\newif\iffinalrun
  \newcommand{\mar}[1]{}
  \newcommand{\mar}[1]{\marginpar{\raggedright\tiny #1}}
\newcommand{\ra}{\rightarrow}
\newcommand{\into}{\hookrightarrow}
\newcommand{\risom}{\buildrel\sim\over\rightarrow}
\title{On some nonadmissible smooth irreducible representations for $\GL_2$}
\author{Daniel Le}
\begin{document}

\maketitle

\begin{abstract}
Let $p>2$ be a prime.
We give examples of smooth absolutely irreducible representations of $\GL_2(\Q_{p^3})$ over ${\F}_{p^3}$ which are not admissible.
\end{abstract}

\section{Introduction}

Smooth representations of $p$-adic reductive groups arise naturally in the theory of automorphic forms.
Smooth here means that every vector is invariant under an open subgroup.
Classical finite-dimensionality results for automorphic forms imply \emph{admissibility}: the invariants of the representation under any open subgroup is finite-dimensional.
Both of these notions make sense for a base field of any characteristic.
Representation theory over base fields of positive characteristic has attracted considerable attention in recent years because of its connection to congruences of automorphic forms and the modularity of Galois representations.

In the recent groundbreaking work \cite{AHHV}, smooth, irreducible, admissible mod $p$ representations of connected reductive $p$-adic groups are classified in terms of supercuspidal representations, closely mirroring the earlier theory in characteristic not equal to $p$.
For a base field of characteristic different from $p$, it is known from \cite[II.2.8]{V} moreover that every smooth irreducible representation of a connected reductive $p$-adic group is admissible.
\cite[Question 1]{AHHV2} asks whether a similar statement is true for mod $p$ representations.
As mentioned in \emph{loc.~cit.}, this question has an affirmative answer in some simple cases and when the group is $\GL_2(\Q_p)$ combining results of \cite{BL,Br03,Berger}.
We provide a negative answer, at least when $p>2$, already for $\GL_2$ but over a larger field.

\begin{thm}\label{thm:main}
Let $p>2$.
There exists a smooth absolutely irreducible $\GL_2(\Q_{p^3})$-representation over $\F_{p^3}$ which is not admissible.
\end{thm}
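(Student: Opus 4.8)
The plan is to produce the representation as an irreducible quotient of the universal supersingular representation attached to a suitably generic Serre weight, arranged so that the infinite family of Serre weights responsible for non-admissibility survives in the quotient. Write $F=\Q_{p^3}$, $G=\GL_2(F)$, $K=\GL_2(\cO_F)$ and $Z=Z(G)$, and choose a sufficiently generic Serre weight $\sigma$ over $\F_{p^3}$ --- an absolutely irreducible $\F_{p^3}[\GL_2(\F_{p^3})]$-module of the form $\bigotimes_{i=0}^{2}(\Sym^{r_i}\F_{p^3}^{2})^{\Fr^i}\otimes\det^{a}$ --- inflated to $K$ and extended to $KZ$ by a fixed action of a uniformizer. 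Recall that $\End_G(\mathrm{c}\text{-}\mathrm{Ind}_{KZ}^{G}\sigma)$ is a polynomial ring $\F_{p^3}[\cT]$ in a single Hecke operator (Barthel--Livn\'e, Herzig); set $\pi_0:=\mathrm{c}\text{-}\mathrm{Ind}_{KZ}^{G}\sigma/(\cT)$, the universal supersingular representation of level $\sigma$. It is nonzero, finitely generated as a $G$-representation, and contains $\sigma$ as a $K$-subrepresentation. Because $F/\Q_p$ has degree $>1$, $\pi_0$ is \emph{not} admissible: the combinatorial description of $\pi_0|_K$ attached to the Bruhat--Tits tree (Breuil--Pa\v{s}k\=unas, with refinements due to Hu and Morra) exhibits infinitely many Serre weights among the Jordan--H\"older factors of $\pi_0|_K$, an unbounded ``tree'' of weights growing out of $\sigma$, so that $\dim_{\F_{p^3}}\pi_0^{K_n}\to\infty$.

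Since $\pi_0$ is finitely generated it has irreducible quotients by Zorn's lemma, and the content of the theorem is that not all of them are admissible. The strategy is to locate a $G$-subrepresentation $N\subsetneq\pi_0$ such that $\pi_0/N$ is non-admissible and has irreducible cosocle, and then to take $\pi:=\cosoc(\pi_0/N)$. To do this one singles out inside $\pi_0|_K$ an infinite, suitably rigid chain of Serre weights $\cdots\to\tau_{-1}\to\tau_0\to\tau_1\to\cdots$; this is the step that uses the genericity of $\sigma$ and the hypothesis $f=3$ (recall that $f\ge 2$ is necessary, since every smooth irreducible mod $p$ representation of $\GL_2(\Q_p)$ is admissible, and for $f=3$ the weight combinatorics leaves enough room to build such a chain while still controlling its radical). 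Using the explicit formulas for $\cT$ in a Breuil--Pa\v{s}k\=unas-type basis, one then checks that the $G$-subrepresentation generated by the chain is non-admissible with well-controlled radical, and kills everything transverse to the chain so as to force an irreducible cosocle without collapsing the chain.

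To upgrade irreducibility over $\F_{p^3}$ to absolute irreducibility it suffices to verify $\End_G(\pi)=\F_{p^3}$; this follows from Schur's lemma and the fact that, by construction, $\dim_{\F_{p^3}}\Hom_K(\sigma,\pi)=1$ for the generic weight $\sigma$ over $\F_{p^3}$, so the division algebra $\End_G(\pi)$ acts by scalars on a one-dimensional $\F_{p^3}$-space and hence equals $\F_{p^3}$. The main obstacle is the control of $N$: one has to understand the lattice of $G$-subrepresentations of $\pi_0$ near the chosen infinite chain --- not just the $K$-socle filtration of $\pi_0$ --- sharply enough to guarantee that passing to the cosocle does not annihilate the weights obstructing admissibility. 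This structural control over supersingular representations of $\GL_2(\Q_{p^f})$ for $f\ge 2$ is the delicate heart of the argument, and is where the hypothesis $f=3$ is used in an essential way.
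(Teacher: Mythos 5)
Your proposal correctly identifies the source of non-admissibility (infinitely many Serre weights in the $K$-socle, possible only for $f\ge 2$), but the central step is a genuine gap, not a verifiable check. You propose to realize $\pi$ as an irreducible quotient of the universal supersingular module $\pi_0=\mathrm{c}\text{-}\mathrm{Ind}_{KZ}^G\sigma/(\cT)$ by finding $N\subsetneq\pi_0$ such that $\pi_0/N$ is non-admissible with irreducible cosocle. The lattice of $G$-subrepresentations of $\pi_0$ for $f\ge 2$ is precisely what is \emph{not} understood: it is a well-known open problem even to decide whether $\pi_0$ has finite length or what its irreducible quotients look like, and nothing in the Breuil--Pa\v{s}k\=unas or Hu--Morra descriptions of $\pi_0|_K$ gives you the $G$-module control you invoke (``kills everything transverse to the chain so as to force an irreducible cosocle without collapsing the chain''). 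In particular, Zorn's lemma gives you irreducible quotients, but there is no mechanism preventing the passage to the cosocle from discarding all but finitely many of the weights; your own closing paragraph concedes that this is ``the delicate heart of the argument'' without supplying it. As written, the proposal is a strategy whose key lemma is an open problem.

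The paper goes in the opposite direction: instead of descending from a compact induction, it builds the representation from the bottom up. One takes the Breuil--Pa\v{s}k\=unas diagram $(D_0(\rho),D_1(\rho),r)$ of a generic $\rho$, forms the infinite direct sum $D_0=\bigoplus_{i\in\Z}D_{0,i}$ of copies of $D_0(\rho)$, and defines a twisted action of $\Pi$ on $D_1=D_0^{I_1}$ that shifts the index $i$ on two chosen $I$-characters $\chi_\pm$ and scales by $\lambda_i$ on a third character $\chi_1$. Pa\v{s}k\=unas's injective-envelope formalism then produces a smooth $G$-representation $\pi$ containing this diagram with $\soc_K\pi=\soc_K D_0$. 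Non-admissibility is immediate ($\pi^{I_1}\supseteq D_1$ is infinite-dimensional), and irreducibility follows from the $K$-socle control by propagating any nonzero subrepresentation through the diagram with the operators $\tld S\tld\Pi$, using the hypothesis $\lambda_i\ne\lambda_0$ to cut the support of a minimal linear combination down to a single index. Note also that in the paper's $\pi$ one has $\dim\Hom_K(\sigma,\pi)=\infty$ for every $\sigma\in\mathcal D(\rho)$, so your proposed multiplicity-one criterion for $\End_G(\pi)=\F_{p^3}$ could not apply to these examples; absolute irreducibility is instead proved directly over $\ovl{\F}_p$ and the $\F_{p^3}$-form is obtained by choosing $\lambda\in\prod_i\F_{p^3}^\times$.
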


\noindent 
It will be clear from the construction that there are infinitely many such representations.
Moreover, similar constructions exist for unramified extensions of larger degree (see Remark \ref{rmk:f=3}), but we content ourselves with describing the simplest example.
The above result is yet another example of a distinguishing feature of the mod $p$ theory, namely that the theory is very sensitive to the field of definition of the group.

Admissibility is a desirable property, in part because it implies that the irreducible representation has a central character, admits Hecke eigenvalues for weights, and has an endomorphism ring of finite dimension over the base field.
\cite[Question 2, Question 8]{AHHV2} ask whether irreducible mod $p$ representations must have central characters and Hecke eigenvalues.
The representations that we construct have central characters and Hecke eigenvalues (matching certain supersingular representations), and so we do not answer these questions.
However, by restricting scalars for a representation we construct, we also prove the following.

\begin{thm}\label{thm:schur}
There exists a smooth irreducible $\GL_2(\Q_{p^3})$-representation over $\F_{p^3}$ whose endomorphisms contain $\ovl{\F}_p$.
\end{thm}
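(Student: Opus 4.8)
The plan is to run the construction that proves Theorem~\ref{thm:main} over $\ovl{\F}_p$ instead of over $\F_{p^3}$, and then to restrict scalars. I expect that construction to take as input auxiliary data valued in the coefficient field, and I would choose that data \emph{generically} in $\ovl{\F}_p$ in order to produce a smooth irreducible representation $V$ of $G:=\GL_2(\Q_{p^3})$ over $\ovl{\F}_p$ for which $V\not\cong V^{\sigma}$ for every nontrivial $\sigma\in\Gal(\ovl{\F}_p/\F_{p^3})$, where $V^{\sigma}$ denotes the Galois-twisted representation. Such a $V$ is automatically non-admissible: if $V$ were admissible, then each $V^K$ ($K$ compact open) would be finite-dimensional over $\ovl{\F}_p$, hence --- since $\ovl{\F}_p$ is algebraic over $\F_p$ --- definable over a finite subfield, so $V$ would be definable over some $\F_{p^{3N}}\subseteq\ovl{\F}_p$ and therefore isomorphic to all its conjugates under $\Gal(\ovl{\F}_p/\F_{p^{3N}})$. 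Now set $\pi:=\Res_{\ovl{\F}_p/\F_{p^3}}V$, i.e.\ $V$ viewed as an $\F_{p^3}$-vector space equipped with its $G$-action. Because $G$ acts $\ovl{\F}_p$-linearly, multiplication by elements of $\ovl{\F}_p$ gives $G$-equivariant $\F_{p^3}$-linear endomorphisms of $\pi$, i.e.\ an embedding $\ovl{\F}_p\hookrightarrow\End_{\F_{p^3}[G]}(\pi)$. So Theorem~\ref{thm:schur} follows once $\pi$ is shown to be irreducible over $\F_{p^3}$.

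For the irreducibility of $\pi$ I would argue as in the classical analysis of a restriction of scalars along a Galois extension. Let $0\neq W\subseteq\pi$ be an $\F_{p^3}[G]$-submodule. Then $\ovl{\F}_p\cdot W$ is a nonzero $\ovl{\F}_p[G]$-submodule of $V$, hence equals $V$, and the largest $\ovl{\F}_p$-subspace of $V$ contained in $W$ is an $\ovl{\F}_p[G]$-submodule, hence is $0$ or $V$; the point is to rule out $0$. Over a finite Galois extension this is exactly the statement that the Galois conjugates of $V$ are pairwise non-isomorphic, which has been arranged. For the infinite extension $\ovl{\F}_p/\F_{p^3}$ I would instead exploit the explicit description of $V$ afforded by the construction: I expect to be able to pick a generating vector $v_0$ of $V$ and, for each $\lambda\in\ovl{\F}_p$, an element of $\F_{p^3}[G]$ carrying $v_0$ to $\lambda v_0$, so that $\F_{p^3}[G]\,v_0=\ovl{\F}_p[G]\,v_0=V$; and moreover to show that any nonzero vector of $V$ generates, under $\F_{p^3}[G]$, a submodule containing $v_0$. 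Granting this, $W$ contains such a $v_0$ and hence $W=\pi$.

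The hard part will be the genericity input of the first paragraph: I must check that the construction really does depend on a parameter moving nontrivially under $\Gal(\ovl{\F}_p/\F_{p^3})$, so that a suitable choice yields $V$ with $V\not\cong V^{\sigma}$ for \emph{all} $\sigma\neq1$, rather than producing only Galois-stable representations such as base changes from $\F_{p^3}$. This means isolating enough isomorphism invariants of the representations output by the construction --- the central character and the Hecke eigenvalues should be Galois-equivariant in the parameters, but one likely needs finer invariants tied to the gluing data --- to see how the isomorphism class varies, and then verifying that a generic choice simultaneously keeps $V$ irreducible and breaks every Galois symmetry. The surrounding bookkeeping (restriction of scalars, the embedding $\ovl{\F}_p\hookrightarrow\End_{\F_{p^3}[G]}(\pi)$, and the reduction of the infinite-extension descent to the explicit structure of $V$) should then be routine.
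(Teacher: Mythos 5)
Your overall architecture is the paper's: build the representation over $\ovl{\F}_p$ via the main construction, restrict scalars to $\F_{p^3}$, and observe that scalar multiplication by $\ovl{\F}_p$ gives the required endomorphisms. That last observation is correct and is exactly what the paper says. But the entire content of the theorem is the irreducibility of the restriction of scalars, and there your proposal has a genuine gap. The criterion you lead with --- $V\not\cong V^{\sigma}$ for all nontrivial $\sigma\in\Gal(\ovl{\F}_p/\F_{p^3})$ --- is neither what the paper establishes nor sufficient for the infinite extension $\ovl{\F}_p/\F_{p^3}$ (as you half-acknowledge), so the ``hard part'' you isolate at the end, namely finding isomorphism invariants that break every Galois symmetry, is a detour that the paper does not take and that would not by itself close the argument. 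What is actually needed is precisely the statement you relegate to ``I expect to be able to'': that every nonzero vector generates, under $\F_{p^3}[G]$, a submodule containing a fixed generator together with all its $\ovl{\F}_p$-multiples. You give no mechanism for producing, inside $\F_{p^3}[G]$, operators realizing multiplication by arbitrary elements of $\ovl{\F}_p$, and such operators do not exist for a generic run of the construction over $\ovl{\F}_p$; they exist only because of a specific choice of the gluing data.

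Concretely, the paper's construction depends on parameters $\lambda=(\lambda_i)_{i\in\Z}\in\prod_i\ovl{\F}_p^\times$ entering the twisted involution $\tld{\Pi}$ on the $\chi_1$-eigenspaces, and Theorem \ref{thm:irr} has a second assertion tailored to this theorem: if $\lambda_0\in\F_q$, $\lambda_i\neq\lambda_0$ for $i\neq 0$, \emph{and the $\F_q$-span of $(\lambda_i)_i$ is all of $\ovl{\F}_p$}, then $\pi$ is irreducible already over $\F_q$. The proof shows that any nonzero $\F_q[G]$-subrepresentation $\pi'$ contains $c\,\iota_0(D_{0,\sigma_{\{2\}}}(\rho))$ for some $c\in\ovl{\F}_p^\times$ (Lemmas \ref{lemma:circle} and \ref{lemma:one}), and then applies $\tld{\Pi}$ to $c\,\iota_j(D_0(\rho)^{\chi_1})$ for varying $j$ --- which multiplies by $\lambda_j$ --- to conclude that $c\lambda_j$ works for every $j$; the spanning hypothesis then forces $\pi'$ to contain the whole $\ovl{\F}_p$-line, and Lemma \ref{lemma:circle} finishes. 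So the missing idea in your proposal is the formulation of the correct condition on the parameters (an infinite family whose $\F_q$-span is $\ovl{\F}_p$, not mere ``genericity'' or Galois-orbit separation) and the identification of the group elements (the $\tld{\Pi}$-action on the $\chi_1$-part) that convert that condition into the needed scalars. As a minor point, your non-admissibility aside is both unnecessary for this theorem and shaky as stated: finite-dimensionality of each $V^{K'}$ does not by itself yield a single finite field of definition for $V$.
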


\noindent Of course, such a representation cannot be absolutely irreducible as the endomorphism ring over $\ovl{\F}_p$ would contain $\ovl{\F}_p\otimes_{\F_{p^3}} \ovl{\F}_p$.

We now make brief remarks on the construction.
Irreducible mod $p$ representations are typically rather difficult to construct, much less nonadmissible ones.
Global constructions coming from the theory of automorphic forms always give admissible representations and parabolic induction preserves admissibility.
However, the Bruhat--Tits tree and the diagrams of \cite{Paskunas} give a powerful method of constructing mod $p$ representations of $p$-adic $\GL_2$ with fixed $K$-socle where $K$ is the maximal compact subgroup.
\cite{BP} uses this close control of the $K$-socle to prove both irreducibility and admissibility for many representations that they construct.
The main idea of this paper is that the control of the $K$-socle can also be used to prove irreducibility and \emph{nonadmissibility}. 
We construct an infinite-dimensional diagram that gives rise to a nonadmissible $\GL_2(\Q_{p^3})$-representation, and prove irreducibility using the methods of \emph{ibid.}

\subsection{Acknowledgments}

We thank Marie-France Vigneras for encouraging the author to work on this problem.
The intellectual debt owed to the work of Christophe Breuil and Vytautas Pa{\v{s}}k{\=u}nas will be clear to the reader.
We thank Stefano Morra for producing some diagrams in an earlier version.
We thank Benjamin Schraen for suggesting that we could prove Theorem \ref{thm:schur}.
We thank Florian Herzig, Yongquan Hu, Karol Koziol, and an anonymous referee for many helpful comments on an earlier version.
The author was supported by the Simons Foundation under an AMS-Simons travel grant and by the National Science Foundation under the Mathematical Sciences Postdoctoral Research Fellowship No.~ 1703182.
We thank the University College London, ENS de Lyon, and L'Institut Montpelli\'erain Alexander Grothendieck for providing hospitality and excellent working conditions while part of this work was carried out.

\subsection{Notation}\label{subsec:not}
Let $p>2$ and let $q$ be $p^f$ for a positive integer $f$.
Fix an algebraic closure $\ovl{\F}_p$ of $\F_q$.
If $V$ is an $\F_q$-vector space, let $V_{\ovl{\F}_p}$ denote $V\otimes_{\F_q}\ovl{\F}_p$

Let $G$ be $\GL_2(\Q_q)$, $Z$ the center of $G$, $K$ be $\GL_2(\Z_q)$, and $I$ (resp.~ $I_1$) the preimage in $K$ of the upper triangular matrices (resp.~unipotent upper triangular matrices) in $\GL_2(\F_q)$ under the natural reduction map.
Let $\Pi\in G$ be the matrix $\Bigl(\begin{smallmatrix} 0 & 1 \\ p &0 \end{smallmatrix} \Bigr)$.
Then $\Pi$ normalizes $I$ and the normalizer $N(I)$ of $I$ is $IZ \sqcup IZ \Pi$.
Moreover, we have an isomorphism
\begin{align}\label{eqn:norm}
N(I)\Big/\Big\langle \Bigl(\begin{smallmatrix} p & 0 \\ 0 & p \end{smallmatrix} \Bigr)\Big\rangle &\risom I \rtimes \Z/2 \\
\Pi &\mapsto (\id,1).
\end{align}
For a character $\chi$ of $IZ$, let $\chi^s$ be the character of $IZ$ given by precomposing $\chi$ by $\Pi$-conjugation.
If $V$ is an $IZ$-representation and $\chi$ a character of $IZ$, we let $V^\chi$ be the $\chi$-isotypic part of $V$.

\section{Diagrams} \label{sec:diag}

\subsection{Diamond diagrams} \label{subsec:dd}

A \emph{diagram} is a triple $(D_0,D_1,r)$ where $D_0$ is a smooth $KZ$-representation, $D_1$ is a smooth $N(I)$-representation, and $r$ is an $IZ$-equivariant map $D_1 \ra D_0$.
A diagram is a \emph{basic $0$-diagram} if $r$ induces an isomorphism $D_1 \risom D_0^{I_1}$.

Let $\rho:G_{\Q_q} \ra \GL_2(\ovl{\F}_p)$ be a generic continuous irreducible representation in the sense of \cite[Definition 11.7]{BP} (such representations exist with the assumption that $p>2$).
Let $\mathcal{D}(\rho)$ be the set of Serre weights defined in \cite[\S 11]{BP}.
To $\rho$, \cite[Theorem 13.8]{BP} attaches a family of basic $0$-diagrams.
We fix for the rest of the paper a basic $0$-diagram $(D_0(\rho),D_1(\rho),r)$ in this family which is defined over $\F_q$.
That is $D_0(\rho)$ and $D_1(\rho)$ are finite dimensional $KZ$ and $N(I)$-representations over $\F_q$, respectively, and $(D_0(\rho)_{\ovl{\F}_p},D_1(\rho)_{\ovl{\F}_p},r)$ is a member of the family constructed in \emph{loc.~cit.}
Then $r$ identifies $D_1(\rho)$ with $D_0(\rho)^{I_1}$ as $IZ$-representations, which we will identify implicitly.

In fact, the isomorphism classes of $D_0(\rho)$ and $D_1(\rho)$ do not depend on the above choice (though $r$ does). 
The $K$-representation $D_0(\rho)|_K$ satisfies the following properties:
\begin{itemize}
\item the $K$-action on $D_0(\rho)$ factors through $\GL_2(\F_q)$;
\item there is a direct sum decomposition \[D_0(\rho) = \oplus_{\sigma \in \mathcal{D}(\rho)} D_{0,\sigma}(\rho)\]
where the $\GL_2(\F_q)$-socle of $D_{0,\sigma}(\rho)$ is $\sigma$ for all $\sigma \in \mathcal{D}(\rho)$;
\item the Jordan--H\"older factors of $D_0(\rho)$ are multiplicity free (\cite[Theorem 13.8]{BP}), and $D_1(\rho)$ is a multiplicity free semisimple $IZ$-representation (\cite[Lemma 14.1]{BP}).
\end{itemize}

Recall from \cite[Lemma 11.4]{BP} and the paragraph thereafter that there is a bijection 
\begin{align*}
2^{\Z/f} &\ra \mathcal{D}(\rho) \\
J &\mapsto \sigma_J
\end{align*}
Define an automorphism $\delta: 2^{\Z/f} \ra 2^{\Z/f}$ by $j\in \delta(J)$ if and only if $j+1 \in J$ (resp.~$j+1\notin J$) for $j \neq 0$ (resp.~for $j=0$).
This ``shift then flip at $j=0$" is denoted $\delta_i$ in \cite[\S 15]{BP}.

We introduce one final piece of notation.
For $0 \leq s \leq q-1$, let 
\[
S_s := \sum_{\lambda \in \F_q} \lambda^s \Bigl(\begin{smallmatrix} [\lambda] & 1 \\ 1 &0 \end{smallmatrix} \Bigr) \in \F_q[K].
\]

\begin{prop}\label{prop:Sop}
Let $v$ be a nonzero element in $D_1(\rho)^\chi$.
Then there is a unique $0\leq s(\chi)\leq q-1$ such that $S_{s(\chi)}(v)$ is a nonzero element of $(\soc_K D_0(\rho))^{I_1}$.
\end{prop}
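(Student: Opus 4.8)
The plan is to reduce the statement to an explicit computation inside the principal series $K$-representations that make up $D_0(\rho)$, using the theory of $\mathrm{GL}_2(\F_q)$-representations as developed in \cite[\S 2--3]{BP}. First I recall that $D_1(\rho) = D_0(\rho)^{I_1}$ is a multiplicity-free semisimple $IZ$-representation, so a character $\chi$ occurring in it picks out a one-dimensional space; fixing a nonzero $v \in D_1(\rho)^\chi$, the point is to understand how the operators $S_s \in \F_q[K]$ act on $v$. The key structural input is that the $I_1$-invariants of an irreducible $\mathrm{GL}_2(\F_q)$-representation $\sigma$ form a two-dimensional space (for $\sigma$ not a character or Steinberg-twist, which is the relevant generic case), spanned by a highest weight vector and its image under the long Weyl element $w_0 = \bigl(\begin{smallmatrix} 0 & 1 \\ 1 & 0\end{smallmatrix}\bigr)$; moreover the two lines are exchanged by $\Pi$-conjugation up to scalars, so $\chi$ and $\chi^s$ are the two $IZ$-characters appearing in $\sigma^{I_1}$. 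The operators $S_s$ are built precisely to detect, for a vector in one of these lines, its highest-weight component: writing $v$ in terms of a highest weight basis, $S_s(v)$ lands in the $U(\F_q)$-coinvariants weighted by the character $a \mapsto a^s$, and by the standard $\mathrm{SL}_2$-computation (the ``$S$-operator'' identities of \cite[\S 2]{BP}, related to the Gauss-sum/Jacobi-sum relations) there is exactly one value $s = s(\chi)$, determined by the highest weight of the constituent in which $v$ sits and by whether $v$ is the highest-weight vector or its $w_0$-translate, for which $S_s(v) \ne 0$.

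The steps, in order, are: (1) decompose $v$ according to $D_0(\rho) = \oplus_{\sigma \in \mathcal{D}(\rho)} D_{0,\sigma}(\rho)$ and observe that, since the Jordan--Hölder factors of $D_0(\rho)$ are multiplicity free and $\chi$ occurs with multiplicity one in $D_1(\rho)$, the vector $v$ has nonzero component in exactly one summand $D_{0,\sigma}(\rho)$; (2) within that summand identify $v$, up to scalar, with a specific vector in $\sigma^{I_1}$ — either a highest weight vector $v_{\sigma}$ or $w_0 v_{\sigma}$ — using the $IZ$-action to pin down which; (3) compute $S_s(v)$ using the explicit $\mathrm{GL}_2(\F_q)$-module structure, showing that for a unique $s(\chi)$ the image is a nonzero highest weight vector in $\sigma$, hence lies in $(\soc_K D_0(\rho))^{I_1}$ (here one uses that each $D_{0,\sigma}(\rho)$ is a quotient, or contains as socle, the irreducible $\sigma$, so the socle's $I_1$-invariants are the span of the relevant highest weight vectors); and (4) check $S_s(v) = 0$ for all other $s$, which follows from the orthogonality of the characters $\lambda \mapsto \lambda^s$ over $\F_q^\times$ once $S_s(v)$ is expanded in a weight basis.

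The main obstacle I expect is step (3): the honest bookkeeping of which highest weight appears, and the sign/nonvanishing of the relevant Jacobi-sum-type coefficient, so that $S_{s(\chi)}(v)$ is genuinely \emph{nonzero} rather than merely landing in the socle. This requires care with the normalizations in \cite{BP} for the constituents $D_{0,\sigma}(\rho)$ and with the precise form of $\sigma^{I_1}$ inside the injective envelopes used there; in particular one must verify that $v$, though a priori only known to be a $\chi$-eigenvector in $D_1(\rho)$, really is a highest-weight-type vector of the unique constituent and not some more complicated combination — this is where multiplicity-freeness of the Jordan--Hölder factors is essential. The remaining steps are essentially formal given the $\mathrm{GL}_2(\F_q)$-representation theory already invoked in the paper.
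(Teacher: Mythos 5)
Your step (2) contains a genuine error that the rest of the argument inherits. The summand $D_{0,\sigma}(\rho)$ is far from irreducible: it is a subrepresentation of an injective envelope of $\sigma$ with many Jordan--H\"older factors, and the character $\chi$ of the eigenvector $v$ need not occur in $\sigma^{I_1}$ at all. Indeed, the characters $\chi_1,\chi_2$ of Proposition \ref{prop:chi}, to which Proposition \ref{prop:Sop} is later applied, are explicitly \emph{not} characters of $(\sigma_{\{2\}})^{I_1}$ or $(\sigma_{\{0,1\}})^{I_1}$. So $v$ cannot be identified, even up to scalar, with a highest weight vector of the socle or its $w_0$-translate, and the explicit $\GL_2(\F_q)$-computation of step (3) as you describe it does not get off the ground. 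Moreover, even after locating the Jordan--H\"older constituent of $D_{0,\sigma}(\rho)$ in whose $I_1$-invariants $\chi$ occurs, the element $S_s(v)$ is not determined by the image of $v$ in that subquotient: it depends on how the constituent is glued into $D_{0,\sigma}(\rho)$. That $S_{s(\chi)}$ pushes $v$ all the way down into the socle, nontrivially, is exactly the content of the proposition, and your multiplicity-freeness remark at the end does not resolve this.

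The missing idea is the hypothesis that makes this a known lemma: since $D_1(\rho)^\chi$ is one-dimensional, the $K$-subrepresentation generated by $v$ has irreducible socle (cf.\ \cite[Proposition 5.1]{Breuil}; alternatively, your step (1) already places $v$ in a single summand $D_{0,\sigma}(\rho)$, whose socle is the irreducible $\sigma$, so every nonzero $K$-subrepresentation of it has socle $\sigma$). Once $\langle K\cdot v\rangle$ is known to have irreducible socle, \cite[Lemma 2.7]{BP} gives precisely the existence and uniqueness of $s(\chi)$ with $S_{s(\chi)}(v)$ a nonzero element of the $I_1$-invariants of that socle; this citation is the paper's entire proof. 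If you wish to avoid the citation, the correct computation is carried out not inside $\sigma$ but inside $\Ind_I^K\chi$ (of which $\langle K\cdot v\rangle$ is a quotient by Frobenius reciprocity), where the operators $S_s$ can be analyzed weight by weight; that is the substance of the proof of \cite[Lemma 2.7]{BP}.
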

\begin{proof}
Since $D_1(\rho)^\chi$ is one-dimensional, the $K$-representation generated by $v$ has irreducible socle using the last two bulleted points above (cf.~the proof of \cite[Proposition 5.1(i)]{Breuil}).
The result now follows from \cite[Lemma 2.7]{BP}.
\end{proof}

Define a linear map 
\[
S: D_1(\rho) \ra (\soc_K D_0(\rho))^{I_1}\\
\]
which maps a nonzero $(IZ,\chi)$-eigenvector $v$ to $S_{s(\chi)}v$.

We recall the following result.

\begin{prop}\label{prop:cycle}
Let $\chi_J$ be the $I$-character of $\sigma_J^{I_1}$.
Then $S\circ \Pi$ gives an isomorphism $D_1(\rho)^{\chi_J}$ to $D_1(\rho)^{\chi_{\delta(J)}}$ for all $J \in 2^{\Z/f}$.
\end{prop}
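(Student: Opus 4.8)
The plan is to show that the composite $S \circ \Pi$ carries the one-dimensional space $D_1(\rho)^{\chi_J}$ onto $D_1(\rho)^{\chi_{\delta(J)}}$, and to do this I would track the story in three stages: first understand what $\Pi$ does on characters, then understand what $S$ does, and finally compose. For the first stage, note that $\Pi$ normalizes $IZ$, so $\Pi$ sends $D_1(\rho)^{\chi}$ isomorphically to $D_1(\rho)^{\chi^s}$, where $\chi^s$ is the $\Pi$-conjugate character introduced in \S\ref{subsec:not}. So I need to identify $(\chi_J)^s$ explicitly in terms of the parametrization $J \mapsto \sigma_J$ of $\mathcal{D}(\rho)$ from \cite[Lemma 11.4]{BP}. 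This is a direct computation with the explicit formulas for the characters $\chi_J$ appearing on $\sigma_J^{I_1}$: conjugating by $\Pi = \bigl(\begin{smallmatrix} 0 & 1 \\ p & 0\end{smallmatrix}\bigr)$ swaps the two diagonal entries of a torus element and twists by a Frobenius-type shift coming from the $p$ in the lower-left corner, which on the level of the exponent tuples defining $\chi_J$ realizes the ``shift'' part of $\delta$.

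For the second stage, recall that by Proposition \ref{prop:Sop} the map $S$ sends a nonzero $(IZ,\chi)$-eigenvector $v$ to $S_{s(\chi)} v$, a nonzero element of $(\soc_K D_0(\rho))^{I_1}$, and moreover by the construction the $K$-subrepresentation generated by $v$ has irreducible socle, equal to exactly one Serre weight $\sigma \in \mathcal{D}(\rho)$. The key point is that $S(v)$ is an $I_1$-fixed vector lying in this socle constituent $\sigma$, hence is again an $IZ$-eigenvector, for the character $\chi_{J'}$ where $\sigma = \sigma_{J'}$. So the content of the proposition is the combinatorial identity: if $v$ spans $D_1(\rho)^{\chi_J}$ — i.e., $v$ lies in the $I_1$-invariants of the constituent $\sigma_J \subseteq D_0(\rho)$ — then the socle of the $K$-representation generated by $\Pi v$ is $\sigma_{\delta(J)}$. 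Combining with stage one, this says the socle of $\langle K \cdot (\text{eigenvector of character }(\chi_J)^s)\rangle$ is $\sigma_{\delta(J)}$; equivalently $s((\chi_J)^s)$ is the unique exponent pushing the $(\chi_J)^s$-eigenvector into $\sigma_{\delta(J)}^{I_1}$.

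This last identification is exactly the combinatorics of the operator ``shift then flip at $j=0$'' recorded in \cite[\S 15]{BP}: the $\Pi$-action on $D_1(\rho) \risom D_0(\rho)^{I_1}$ relative to the $K$-socle filtration permutes the Serre weights by precisely $\delta$. I would cite the relevant statements from \cite[\S 15]{BP} (where this $\delta_i$ — or in their notation the action of $\Pi$ on the set of weights — is computed) rather than redo the representation-theoretic bookkeeping of $\GL_2(\F_q)$-constituents inside a principal series. The main obstacle is precisely this bookkeeping: one must be careful about the distinction between the weight $\sigma_J$ containing $v$ in its $I_1$-invariants and the weight whose socle is generated by $v$ under $K$, and about the precise normalization of the bijection $J \mapsto \sigma_J$ and of $\delta$ (including the special role of $j = 0$, which encodes the ``flip'' coming from the unit versus non-unit entries of $\Pi$). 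Once these conventions are pinned down to match \cite[\S\S 11, 15]{BP}, the proof is a matter of assembling the isomorphisms $D_1(\rho)^{\chi_J} \xrightarrow{\ \Pi\ } D_1(\rho)^{(\chi_J)^s} \xrightarrow{\ S\ } \sigma_{\delta(J)}^{I_1} = D_1(\rho)^{\chi_{\delta(J)}}$, each factor an isomorphism of one-dimensional spaces.
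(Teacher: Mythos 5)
Your proposal is correct and follows essentially the same route as the paper, which simply cites \cite[Lemma 15.2]{BP} (together with the proof of \cite[Proposition 5.1]{Breuil}) for exactly the combinatorial fact you isolate — that the $K$-socle of the subrepresentation generated by a $(\chi_J)^s$-eigenvector is $\sigma_{\delta(J)}$. Your decomposition into the $\Pi$-step (eigencharacter conjugation) and the $S$-step (projection to the irreducible socle via Proposition \ref{prop:Sop}) is a faithful unpacking of what that citation is doing.
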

\begin{proof}
This follows from \cite[Lemma 15.2]{BP} (see also the proof of \cite[Proposition 5.1]{Breuil}).
\end{proof}

\subsection{An infinite diagram}

In this section, we let $f$ be $3$.
\begin{rmk}\label{rmk:f=3}
When $f=2$, $2^{\Z/f}$ is a single $\delta$-orbit.
When $f=3$, $2^{\Z/f}$ consists of two $\delta$-orbits, namely
\[
\emptyset \mapsto \{0\} \mapsto \{0,2\} \mapsto \{0,1,2\} \mapsto \{1,2\} \mapsto \{1\} \mapsto \emptyset 
\]
\[
\{2\} \mapsto \{0,1\} \mapsto \{2\}.
\]
For $f>3$, $2^{\Z/f}$ always contains more than one $\delta$-orbit since the size of an orbit must divide the order of the automorphism $\delta$, which is $2f$.
It is the existence of more than one $\delta$-orbit which allows us to make the construction in this section.
\end{rmk}
Let $D_0$ be the $KZ$-representation $\oplus_{i\in \Z} D_{0,i}$ where there is a fixed isomorphism $D_{0,i}\cong D_0(\rho)_{\ovl{\F}_p}$.
Let $\iota_i$ be the inclusion $D_0(\rho) \subset D_{0,i} \subset D_0$.
For $v\in D_0(\rho)$, we denote $\iota_i(v)$ by $v_i$.

Let $D_1$ be $D_0^{I_1}$.
Let $\lambda = (\lambda_i)_{i\in \Z}$ be in $\prod_{i\in \Z}\ovl{\F}_p^\times$.
For such a $\lambda$, we now define an action of $N(I)$ on $D_1$ such that $\Pi^2$ acts trivially.
By (\ref{eqn:norm}), it suffices to define an involution on $D_1$ taking $D_1^\chi$ to $D_1^{\chi^s}$ for every character $\chi$ of $IZ$.
We will denote this involution by $\tld{\Pi}$.

Let $\chi_+$ be the $IZ$-character of the space $\sigma_{\{1\} }^{I_1}$ and $\chi_-$ be the $IZ$-character of the space $\sigma_{\{0,1\} }^{I_1}$ (as usual $\Pi^2$ acts trivially).

\begin{prop}\label{prop:chi}
There is an $IZ$-character $\chi_1$ (resp.~$\chi_2$) such that both of the spaces $D_{0,\sigma_{\{2\} }}(\rho)^{\chi_1}$ and $D_{0,\sigma_\emptyset}(\rho)^{\chi_1^s}$ \emph{(}resp.~$D_{0,\sigma_{\{0,1\} }}(\rho)^{\chi_2}$ and $D_{0,\sigma_{\{0\} }}(\rho)^{\chi_2^s}$\emph{)} are nonzero.
\end{prop}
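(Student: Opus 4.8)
The plan is to reduce the existence of such characters to the combinatorial structure of the action of $S \circ \Pi$ on the spaces $D_1(\rho)^{\chi_J}$ described in Proposition~\ref{prop:cycle}. Recall that $\delta$ is the ``shift then flip at $j=0$'' automorphism of $2^{\Z/3}$, and that $S \circ \Pi$ carries $D_1(\rho)^{\chi_J}$ isomorphically onto $D_1(\rho)^{\chi_{\delta(J)}}$. Since $\soc_K D_0(\rho) = \oplus_{\sigma \in \mathcal{D}(\rho)} \sigma$ and $S$ lands in $(\soc_K D_0(\rho))^{I_1}$ with $S$ mapping $D_1(\rho)^{\chi}$ into the $\chi$-isotypic piece of this socle (by construction of $S$ via Proposition~\ref{prop:Sop}), the nonvanishing of $D_{0,\sigma_K}(\rho)^{\chi}$ is controlled by whether $\chi$ appears in $\sigma_K^{I_1}$, i.e.\ whether $\chi = \chi_K$ after appropriate $\Pi$-twisting, OR more generally by whether $\chi$ occurs among the $IZ$-characters of $D_{0,\sigma_K}(\rho)$, which include $\chi_K$ together with its ``shifts'' under the cyclic structure.

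First I would spell out, for $f=3$, the $\delta$-orbits on $2^{\Z/3}$: one checks directly that $\delta$ permutes the eight subsets of $\Z/3$, and I would identify the orbit containing $\{2\}$ and the orbit containing $\emptyset$ (respectively the orbits of $\{0,1\}$ and $\{0\}$). The key point is that $\sigma_{\{2\}}$ and $\sigma_{\emptyset}$ lie in the same $\delta$-orbit — equivalently, there is some power $\delta^k$ with $\delta^k(\{2\}) = \emptyset$ — and likewise $\{0,1\}$ and $\{0\}$ lie in a common $\delta$-orbit. Then, tracking a nonzero eigenvector through $(S\circ\Pi)^k$ and using Proposition~\ref{prop:cycle} repeatedly, together with the fact that $\Pi$-conjugation sends $\chi$ to $\chi^s$, I would produce a character $\chi_1$ with $D_1(\rho)^{\chi_1}$ nonzero and such that applying $S$ lands in $D_{0,\sigma_{\{2\}}}(\rho)$, while applying $S$ after one further $\Pi$-twist lands in $D_{0,\sigma_{\emptyset}}(\rho)$; unwinding, $\chi_1$ occurs in $D_{0,\sigma_{\{2\}}}(\rho)$ and $\chi_1^s$ occurs in $D_{0,\sigma_{\emptyset}}(\rho)$. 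The construction of $\chi_2$ is identical with $(\{0,1\},\{0\})$ in place of $(\{2\},\emptyset)$.

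The main obstacle I anticipate is bookkeeping: one must verify that the two Serre weights in each pair are genuinely in the same $\delta$-orbit (a finite check, but one that must be done correctly for $f=3$), and one must be careful about how many $\Pi$-conjugations separate the two desired isotypic components — the statement asks for $\chi$ on one weight and $\chi^s$ on the other, so the parity of the orbit-distance matters and has to match the single $s$-twist. I would therefore organize the argument around the explicit cycle structure of $\delta$ on $2^{\Z/3}$ and record which pairs $(\sigma_J, \sigma_{J'})$ are related by an \emph{odd} number of steps of $S\circ\Pi$; the pairs $(\sigma_{\{2\}}, \sigma_\emptyset)$ and $(\sigma_{\{0,1\}}, \sigma_{\{0\}})$ should fall out of this analysis. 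Everything else is a direct application of Propositions~\ref{prop:Sop} and~\ref{prop:cycle} and the one-dimensionality of the $\chi$-isotypic pieces of $D_1(\rho)$.
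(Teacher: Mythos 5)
There is a genuine gap, and it is fatal to the proposed strategy: your key combinatorial claim is false. For $f=3$ the automorphism $\delta$ of $2^{\Z/3}$ has exactly two orbits, namely the $2$-cycle $\{\{2\},\{0,1\}\}$ and the $6$-cycle $\{\emptyset,\{0\},\{0,2\},\{0,1,2\},\{1,2\},\{1\}\}$ --- this is precisely the partition defining $D_{0,I}(\rho)$ and $D_{0,II}(\rho)$ in Section \ref{sec:irr}. So $\{2\}$ and $\emptyset$ are \emph{not} in the same $\delta$-orbit, and neither are $\{0,1\}$ and $\{0\}$; no power of $S\circ\Pi$ carries one member of either pair to the other, and the ``finite check'' you defer would come out the wrong way.

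More fundamentally, the mechanism you propose cannot prove the proposition even in principle. Proposition \ref{prop:cycle} only sees the socle characters $\chi_J$ of $\sigma_J^{I_1}$, and since $D_1(\rho)$ is multiplicity free, the line $D_1(\rho)^{\chi_J^s}$ lies inside $D_{0,\sigma_{\delta(J)}}(\rho)$: the $s$-conjugate of a socle character always lands in the \emph{same} $\delta$-orbit. The entire point of $\chi_1$ and $\chi_2$ --- as the paper stresses immediately after the proposition --- is that they are characters of $D_{0,\sigma_{\{2\}}}(\rho)^{I_1}$ (resp.\ $D_{0,\sigma_{\{0,1\}}}(\rho)^{I_1}$) which do \emph{not} come from the socle $(\sigma_{\{2\}})^{I_1}$ (resp.\ $(\sigma_{\{0,1\}})^{I_1}$); they are the bridge between the two $\delta$-orbits that makes the irreducibility arguments of Theorems \ref{thm:BPirr} and \ref{thm:irr} go through. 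Exhibiting them requires knowing the full list of $I$-characters occurring in $D_{0,\sigma}(\rho)^{I_1}$, not just the socle ones. The paper's proof therefore proceeds by explicit computation: it reads off all such characters from \cite[Corollary 14.10]{BP}, computes their $s$-conjugates via \cite[Lemma 15.2]{BP}, and writes down $\chi_1$ and $\chi_2$ by their tuples of exponents. Any correct proof has to input this finer information about the non-socle constituents of $D_{0,\sigma}(\rho)$; Propositions \ref{prop:Sop} and \ref{prop:cycle} alone do not suffice.
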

\begin{proof}
This follows from an explicit check using \cite[Corollary 14.10 and Lemma 15.2]{BP}.
In the notation of \cite[\S 11]{BP}, we have that $\sigma_{\{2\} }$ corresponds to 
\[
(\lambda_0(r_0),\lambda_1(r_1),\lambda_2(r_2)) = (r_0, p-2-r_1,r_2+1)
\]
and $\sigma_{\{0,1\} }$ corresponds to
\[
(\lambda_0(r_0),\lambda_1(r_1),\lambda_2(r_2)) = (p-1-r_0,r_1+1,p-2-r_2).
\]
Then $\chi_1$ corresponds to 
\[
(\mu_0(\lambda_0(r_0)),\mu_1(\lambda_1(r_1)),\mu_2(\lambda_2(r_2))) = (p-2-r_0, p-1-r_1,r_2+1)
\]
and $\chi_2$ corresponds to 
\[
(\mu_0(\lambda_0(r_0)),\mu_1(\lambda_1(r_1)),\mu_2(\lambda_2(r_2))) = (p-r_0, r_1+1,r_2).
\]
\end{proof}

In fact, the characters $\chi_1$ and $\chi_2$ are uniquely described by the properties in Proposition \ref{prop:chi}, but we will not use this.
As we will see, the only property that we will need is that $\chi_1$ (resp.~$\chi_2$) is a character in $D_{0,\sigma_{\{2\} }}(\rho)^{I_1}$ (resp.~$D_{0,\sigma_{\{0,1\} }}(\rho)^{I_2}$), which is not in $(\sigma_{\{2\} })^{I_1}$ (resp.~$(\sigma_{\{0,1\} })^{I_1}$).
The exact choices and formulas of Propposition \ref{prop:chi} will not be important, and we include them only for the sake of concreteness.

If $v\in D_1(\rho)^\chi$ with 
\[\chi\notin \{ \chi_+, \chi_+^s, \chi_-, \chi_-^s, \chi_1, \chi_1^s\},\]
we define
\[\tld{\Pi}(v_i) = (\Pi v)_i.\]
If $v\in D_1(\rho)^{\chi_+}$, then we define
\[\tld{\Pi}(v_i) = (\Pi v)_{i+1}.\]
If $v\in D_1(\rho)^{\chi_-}$, then we define
\[\tld{\Pi}(v_i) = (\Pi v)_{i-1}.\]
If $v\in D_1(\rho)^{\chi_1}$, then we define
\[\tld{\Pi}(v_i) = \lambda_i(\Pi v)_i.\]
This now uniquely defines an $\ovl{\F}_p$-linear involution $\tld{\Pi}$ of $D_1$, and it takes $D_1^\chi$ to $D_1^{\chi^s}$ for every character $\chi$ of $IZ$ as desired.

Let $D(\lambda)$ be the basic $0$-diagram $(D_0, D_1, \mathrm{can})$ with the above actions, where $\mathrm{can}$ denotes the canonical inclusion $D_1\subset D_0$.
We define an $\ovl{\F}_p$-linear map $\tld{S}: D_1 \ra (\soc_K D_0)^{I_1}$ by the formula $\tld{S} \iota_i = \iota_i S$, where $S$ is as defined in \S \ref{subsec:dd}.

\section{The construction}\label{sec:irr}

For the purposes of notation, we review the proof of the following result, which is a special case of \cite[Theorem 9.8]{BP}, although we work over $\F_q$ rather than $\ovl{\F}_p$.

\begin{thm}\label{thm:BPcon}
There exists a smooth $G$-representation $\tau$ over $\F_q$ such that 
\begin{itemize}
\item there is an injection of diagrams $(D_0(\rho),D_1(\rho),r) \subset (\tau|_{KZ},\tau|_{N(I)},\id)$;
\item $\tau$ is generated as a $G$-representation by the image of $D_0(\rho)$; and 
\item the induced injection $\soc_K D_0(\rho) \into \soc_K \tau$ is an isomorphism.
\end{itemize}
\end{thm}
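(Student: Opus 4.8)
The plan is to construct $\tau$ via a colimit along the Bruhat--Tits tree, following \cite[\S 9]{BP} but being careful about fields of definition. First I would recall the structure of the tree of $\PGL_2(\Q_q)$: its vertices are in bijection with $G/KZ$, its edges with $G/N(I)$, and a vertex and edge are incident exactly when the corresponding cosets intersect nontrivially. To each vertex $gKZ$ we attach the $KZ$-representation $g \cdot D_0(\rho)$ (meaning a copy of $D_0(\rho)$ with the $KZ$-action twisted by $g$), to each edge $gN(I)$ we attach $g\cdot D_1(\rho)$, and the incidence maps are induced by $r$ together with the appropriate translates. The representation $\tau$ is then defined to be the $G$-representation $\varinjlim$ over this diagram, or concretely as a suitable subspace of the space of functions on the tree with values in the attached representations; the point is that $G$ acts by permuting the tree compatibly with the attached data, so $\tau$ is smooth and carries a $G$-action with an evident map from $D_0(\rho)$ (the summand at the base vertex).

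Next I would verify the three bulleted properties. The injection of diagrams $(D_0(\rho),D_1(\rho),r)\hookrightarrow(\tau|_{KZ},\tau|_{IZ},\id)$ comes from including the base vertex's and base edge's data; one checks the inclusion is $KZ$- and $IZ$-equivariant and compatible with $r$ and the identity, which is built into the construction. Generation by $D_0(\rho)$ as a $G$-representation holds because the $G$-translates of the base vertex cover all vertices of the tree (the action of $G$ on vertices is transitive), and the data at adjacent vertices is glued through the edges; this is the content of the corresponding step in \cite[\S 9]{BP}. The third property, that $\soc_K D_0(\rho)\to\soc_K\tau$ is an isomorphism, is the substantive point and the step I expect to be the main obstacle: one must show that no new $K$-socle is created when passing to the colimit. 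This is exactly the key injectivity/exactness input from \cite[\S 9]{BP} (ultimately resting on the combinatorics of the $K$-socle encoded in Proposition \ref{prop:Sop} and the multiplicity-one properties of $D_0(\rho)$); I would cite \cite[Theorem 9.8]{BP} for this, noting that their argument is insensitive to the coefficient field.

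Finally, I would address the only genuinely new point relative to \cite{BP}: that everything descends to $\F_q$ rather than requiring $\ovl{\F}_p$. Since $D_0(\rho)$ and $D_1(\rho)$ are by hypothesis defined over $\F_q$, and the tree, the $G$-action on it, and the gluing maps $r$ are all defined over $\F_q$ (indeed over $\F_p$), the colimit $\tau$ is naturally an $\F_q$-representation, and base-changing to $\ovl{\F}_p$ recovers the representation of \cite[Theorem 9.8]{BP}. The three bulleted properties are preserved under the faithfully flat base change $-\otimes_{\F_q}\ovl{\F}_p$ (injections, generation, and the socle isomorphism all descend), so it suffices to check them after extending scalars, where they are \emph{loc.~cit.} This reduces the entire proof to invoking \cite[Theorem 9.8]{BP} together with the elementary observation that the construction is functorial in the coefficient field.
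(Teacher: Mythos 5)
Your construction is not the one the paper (following \cite{BP}) uses, and the difference matters precisely at the step you flag as ``the substantive point.'' What you describe --- attaching $g\cdot D_0(\rho)$ to vertices, $g\cdot D_1(\rho)$ to edges, and taking the colimit --- is the $0$-th homology $H_0(X,\mathcal{D})$ of the coefficient system attached to the basic $0$-diagram itself. For that object there is no control of the $K$-socle: new Jordan--H\"older factors of $\soc_K$ can and do appear in the colimit, and \cite[Theorem 9.8]{BP} does not say otherwise --- it asserts the existence of \emph{some} representation with the three listed properties, constructed by a different route, so citing it for the socle of \emph{your} colimit is inapplicable (indeed circular, since Theorem 9.8 is essentially the statement being proved). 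Even the injectivity of $D_0(\rho)\to H_0(X,\mathcal{D})$ is not free. The paper's proof instead first embeds the diagram into an ``injective'' diagram: it takes $\Omega$ to be the $K$-injective envelope of $D_0(\rho)$, extends the $N(I)$-action from $D_1(\rho)$ to $\Omega^{I_1}$ and its complement using \cite[Lemmas 9.5, 9.6]{BP}, invokes \cite[Corollary 5.18]{Paskunas} to promote this to a $G$-action on $\Omega$ itself, and sets $\tau$ equal to the $G$-subrepresentation of $\Omega$ generated by $D_0(\rho)$. The socle condition is then automatic: $\soc_K\tau\subseteq\soc_K\Omega=\soc_K D_0(\rho)$ because $\Omega$ is an injective \emph{envelope}. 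This is exactly the control you cannot get from the bare colimit.

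Your descent-to-$\F_q$ discussion also inherits this problem: you argue that ``the colimit is naturally an $\F_q$-representation'' and that base change recovers the representation of \cite[Theorem 9.8]{BP}, but since that representation is not the colimit, the claim does not apply to it. The correct (and still not entirely formal) point, which the paper makes, is that the choices in BP's construction --- the injective envelope, the idempotent $e$, and in particular the extension of the $\Pi$-action supplied by \cite[Lemma 9.5]{BP} --- can all be made $\F_q$-linearly once the diagram is defined over $\F_q$ and its $IZ$-constituents are multiplicity free. So the statement you need to check is that BP's lemmas produce $\F_q$-rational data, not that a colimit commutes with base change.
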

\begin{proof}
Let $\Omega$ be the $K$-injective envelope of $D_0(\rho)|_K$.
We give $\Omega$ a $KZ$-action by demanding that $\Pi^2$ acts trivially.
There is an idempotent $e\in \End_I(\Omega)$ such that $e(\Omega)|_I$ is an $I$-injective envelope of $D_1(\rho)$.
There is a decomposition of $e(\Omega)|_I$ as a direct sum 
\[\oplus_\chi \Omega_\chi,\]
where $\chi$ runs over the $I$-characters in $D_1(\rho)$ and $\Omega_\chi$ is an $I$-injective envelope of the $\chi$-isotypic part of $D_1(\rho)$.
By \cite[Lemma 9.5]{BP}, there is an ${\F}_q$-linear map $e(\Omega) \ra e(\Omega)$ which intertwines the action and $\Pi$-conjugate action of $IZ$, extends the action of $\Pi$ on $D_1(\rho)$, and whose restriction to $\Omega_\chi$ for each $\chi$ above gives a map
\[\Omega_\chi \ra \Omega_{\chi^s}.\]
This gives an action of $N(I)$ on $e(\Omega)$.
There is also an action of $N(I)$ on $(1-e)(\Omega)$ by \cite[Lemma 9.6]{BP}.
This gives an action of $N(I)$ on $\Omega$ whose restriction to $I$ is compatible with the action coming from $KZ$ on $\Omega$.
By \cite[Corollary 5.18]{Paskunas}, this gives an action of $G$ on $\Omega$.
We then take $\tau$ to be the $G$-representation generated by $D_0(\rho)$.
\end{proof}

\begin{thm}\label{thm:con}
There exists a smooth $G$-representation $\pi$ over $\ovl{\F}_p$ such that 
\begin{itemize}
\item there is an injection of diagrams $D(\lambda) \subset (\pi|_{KZ},\pi|_{N(I)},\id)$;
\item $\pi$ is generated as a $G$-representation by the image of $D_0$; 
\item the induced injection $\soc_K D_0 \into \soc_K \pi$ is an isomorphism; and
\item if $\lambda \in \prod_{i\in \Z} \F_q^\times$, then $\pi$ is defined over $\F_q$.\end{itemize}
\end{thm}
\begin{proof}
Let $\Omega$ be the $K$-injective envelope of $D_0(\rho)|_K$ as in the proof of Theorem \ref{thm:BPcon}.
We give $\Omega$ a $KZ$-action by demanding that $\Pi^2$ acts trivially.
Recall the definitions of $e \in \End_I(\Omega)$ and $\Omega_\chi$ from the proof of Theorem \ref{thm:BPcon}.
Now let $\Omega_\infty$ be the $KZ$-representation $\oplus_{i\in \Z} \Omega_i$ where there is a fixed isomorphism $\Omega_i \cong \Omega_{\ovl{\F}_p}$.
Let $\iota_i$ be the $KZ$-injection $\Omega \subset \Omega_i \subset \Omega_\infty$.
To define an action of $N(I)$ on $\Omega_\infty$, it suffices to define an involution, which we call $\tld{\Pi}$, on $\Omega_\infty$ which intertwines the action and $\Pi$-conjugate action of $IZ$.
For each $i\in \Z$, we define $\tld{\Pi} \circ \iota_i|_{(1-e)(\Omega)}$ to be $\iota_i \circ \Pi|_{(1-e)(\Omega)}$.
For $\chi \notin \{\chi_+,\chi_+^s,\chi_-,\chi_-^s,\chi_1,\chi_1^s\}$, we define $\tld{\Pi} \circ \iota_i|_{\Omega_\chi}$ to be $\iota_i \circ\Pi|_{\Omega_\chi}$.
We define $\tld{\Pi} \circ \iota_i|_{\Omega_{\chi_+}}$ to be $\iota_{i+1} \circ \Pi|_{\Omega_{\chi_+}}$, $\tld{\Pi} \circ \iota_i|_{\Omega_{\chi_-}}$ to be $\iota_{i-1} \circ \Pi|_{\Omega_{\chi_-}}$, and $\tld{\Pi} \circ \iota_i|_{\Omega_{\chi_1}}$ to be $\iota_i \circ \lambda_i\Pi|_{\Omega_{\chi_1}}$.
This completely determines the $\ovl{\F}_p$-linear involution $\tld{\Pi}$.
It is easy to see that the defined action of $N(I)$ on $\Omega_\infty$ extends the action of $N(I)$ on $D_1$.
By \cite[Corollary 5.18]{Paskunas}, this gives an action of $G$ on $\Omega_\infty$.
If $\lambda \in \prod_{i\in \Z} \F_q^\times$, then this action is defined over $\F_q$.
Then if we let $\pi$ be the $G$-subrepresentation of $\Omega_\infty$ generated by $D_0$, $\pi$ satisfies the required hypotheses.
Indeed, we have that $\soc_K\Omega_\infty = \soc_K \pi = \soc_K D_0$, and $\pi$ is defined over $\F_q$ if $\Omega_\infty$ is.
\end{proof}

Let $D_{0,I}(\rho)$ and $D_{0,II}(\rho)$ be $D_{0,\sigma_{\{2\} }}(\rho)\oplus D_{0,\sigma_{\{0,1\} }}(\rho)$ and $\oplus_J D_{0,\sigma_J}(\rho)$, respectively, where the sum is over 
\[J \in \{\emptyset, \{0\},\{0,2\}, \{0,1,2\}, \{1,2\}, \{1\}\}.\]
(This partition $2^{\Z/3} = J \cup J^c$ corresponds to $\delta$-orbits, see Remark \ref{rmk:f=3}.)
We now recall the following special case of \cite[Theorem 19.10(i)]{BP}, since the arguments play a crucial role in the proof of Theorem \ref{thm:irr}.

\begin{thm}\label{thm:BPirr}
Any $G$-representation $\tau$ satisfying the hypotheses in Theorem \ref{thm:BPcon} is absolutely irreducible.
\end{thm}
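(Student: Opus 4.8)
The plan is to follow the strategy of \cite[Theorem 19.10]{BP} and show that any nonzero $G$-subrepresentation $W \subset \tau$ must contain all of $\soc_K D_0(\rho)$, hence (by the hypothesis that $\soc_K D_0(\rho) \into \soc_K \tau$ is an isomorphism and $\tau$ is generated by $D_0(\rho)$) must equal $\tau$. The key structural inputs are: $\tau$ is generated over $G$ by the finite-dimensional $KZ$-representation $D_0(\rho)$; the $K$-socle of $\tau$ agrees with that of $D_0(\rho)$ and decomposes as $\oplus_{\sigma \in \mathcal{D}(\rho)} \soc_K D_{0,\sigma}(\rho)$ with each summand irreducible; and the map $S \circ \Pi$ cyclically permutes the one-dimensional spaces $D_1(\rho)^{\chi_J} = (\sigma_J)^{I_1}$ according to the permutation $\delta$ of $2^{\Z/f}$ (Proposition~\ref{prop:cycle}). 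Absolute irreducibility, as opposed to irreducibility over $\F_q$, follows because all of these statements hold after base change to $\ovl{\F}_p$: $\soc_K D_0(\rho)_{\ovl{\F}_p}$ is still multiplicity-free with irreducible summands (the relevant Serre weights are absolutely irreducible $\GL_2(\F_q)$-representations), so the same argument applies verbatim over $\ovl{\F}_p$.

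First I would take a nonzero $G$-subrepresentation $W \subset \tau$ and, using smoothness together with the fact that $\soc_K \tau$ is essential in $\tau|_K$ (as $\tau|_K$ embeds in an injective envelope of $D_0(\rho)|_K$), observe that $W \cap \soc_K \tau \neq 0$; hence $W$ contains at least one irreducible $K$-summand $\soc_K D_{0,\sigma_J}(\rho)$ for some Serre weight $\sigma_J \in \mathcal{D}(\rho)$. Next I would propagate this: the one-dimensional $I_1$-invariant line of that summand is an $(IZ,\chi_J)$-eigenvector $v$; applying $\Pi$ (which acts on $D_1(\rho) = D_0(\rho)^{I_1} \subset \tau$) and then the operator $S_{s(\chi_J^s)}$ — realized inside $\tau$ via the $K$-action — produces, by Propositions~\ref{prop:Sop} and~\ref{prop:cycle}, a nonzero vector in $(\soc_K D_{0,\sigma_{\delta(J)}}(\rho))^{I_1}$, which therefore lies in $W$. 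Since $W$ is $K$-stable and each $\soc_K D_{0,\sigma}(\rho)$ is irreducible over $K$, $W$ contains the whole summand $\soc_K D_{0,\sigma_{\delta(J)}}(\rho)$. Iterating $\delta$ and using that $\delta$ acts transitively on $2^{\Z/f}$ when $f = 3$ (indeed $\delta$ is a single $2^f$-cycle, the ``shift-then-flip'' having order $2^f$), we conclude $W \supseteq \soc_K D_0(\rho) = \soc_K \tau$, so $W = \tau$ as $\tau$ is generated by $D_0(\rho)$ and $\tau / W$ would have trivial $K$-socle. This proves irreducibility over $\F_q$, and the identical argument over $\ovl{\F}_p$ — valid because every object involved is already defined over $\F_q$ and stays semisimple and multiplicity-free after extension of scalars — gives absolute irreducibility.

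The main obstacle, and the point requiring the most care, is the transitivity step: one must check that the permutation $\delta$ of $2^{\Z/f}$ acts with a single orbit, so that starting from one Serre weight in $\mathcal{D}(\rho)$ one reaches all of them. For general $f$ this is the statement that $\delta_i$ (in the notation of \cite[\S 15]{BP}) is a $2^f$-cycle, which one verifies by a direct combinatorial computation — tracking how ``shift the subset of $\Z/f$ by one, then toggle membership of $0$'' iterates — and it is here that the hypothesis $f = 3$ (or at least that we are in a case where $\delta$ is transitive) is genuinely used; for the representation $\tau$ of Theorem~\ref{thm:BPcon} this is exactly \cite[Theorem 19.10(i)]{BP}. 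A secondary technical point is justifying that the operators $S_s \in \F_q[K]$ act on $\tau$ compatibly with their action on $D_0(\rho)$ and land in $\soc_K \tau = \soc_K D_0(\rho)$ — this is immediate since $D_0(\rho) \subset \tau$ is $KZ$-stable and $\soc_K$ is functorial — and that applying $\Pi$ to a vector of $D_1(\rho)$ keeps us inside $\tau$, which holds because $D_1(\rho) \subset \tau$ is $N(I)$-stable by construction of $\tau$ in Theorem~\ref{thm:BPcon}.
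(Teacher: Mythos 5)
Your argument has two genuine gaps, and the first is fatal as stated. You claim that the permutation $\delta$ of $2^{\Z/f}$ is a single $2^f$-cycle, so that iterating the $S\circ\Pi$ propagation starting from one Serre weight reaches all of them. This is false: for $f=3$ one computes $\delta(\{2\})=\{0,1\}$ and $\delta(\{0,1\})=\{2\}$, so $\delta$ has an orbit of size $2$, and the remaining six subsets form a second orbit $\{\emptyset,\{0\},\{0,2\},\{0,1,2\},\{1,2\},\{1\}\}$. The paper says this explicitly when it introduces $D_{0,I}(\rho)$ and $D_{0,II}(\rho)$: ``this partition $2^{\Z/3}=J\cup J^c$ corresponds to $\delta$-orbits.'' Your iteration therefore only shows that $W$ contains the socle constituents indexed by one of the two orbits. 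The entire purpose of Proposition \ref{prop:chi} is to bridge the orbits: the character $\chi_1$ occurs in $D_{0,\sigma_{\{2\}}}(\rho)^{I_1}$ (orbit $I$) while $\chi_1^s$ occurs in $D_{0,\sigma_{\emptyset}}(\rho)^{I_1}$ (orbit $II$), and since $D_1(\rho)$ is multiplicity free as an $IZ$-representation, applying $\Pi$ to a nonzero $\chi_1$-eigenvector of $\tau'$ produces a nonzero vector of $D_{0,\sigma_\emptyset}(\rho)$ in $\tau'$, and conversely. Your proof never uses $\chi_1$ and cannot cross between the orbits.

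The second gap is in the propagation and the conclusion. Your mechanism ($\Pi$ followed by $S_{s}$) only ever produces vectors in $\soc_K D_0(\rho)$, so at best you conclude $\soc_K\tau\subseteq W$; your final step ``$\tau/W$ would have trivial $K$-socle'' is a non sequitur, since a proper quotient of $\tau$ by a subrepresentation containing $\soc_K\tau$ can perfectly well have nonzero $K$-socle. The paper instead invokes \cite[Lemma 19.7]{BP}, which gives the inclusion of the \emph{full block} $D_{0,\sigma_{\delta(J)}}(\rho)\subseteq\tau'$, not just its socle; this is needed both to reach the eigenvector for $\chi_1$ (which lies in $D_{0,\sigma_{\{2\}}}(\rho)^{I_1}$ but \emph{not} in $(\sigma_{\{2\}})^{I_1}$, so socles alone never see it) and to conclude $D_0(\rho)\subseteq\tau'$, whence $\tau'=\tau$ by the generation hypothesis. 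Your framework for absolute irreducibility (work over $\ovl{\F}_p$ from the start) and the initial step ($W$ meets the $K$-socle) are fine, but the core of the argument needs to be restructured around \cite[Lemma 19.7]{BP} and the orbit-bridging via $\chi_1$.
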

\begin{proof}
Let $\tau'\subset \tau_{\ovl{\F}_p}$ be a nonzero $G$-subrepresentation.
Since $\soc_K \tau_{\ovl{\F}_p} \cong \soc_K D_0(\rho)_{\ovl{\F}_p}$, there is a $J$ such that $\Hom_K(\sigma_J,\tau')$ is nonzero.
Then by \cite[Lemma 19.7]{BP}, we have the inclusion $D_{0,\sigma_{\delta(J)}}(\rho)_{\ovl{\F}_p} \subset \tau'$.
Repeating this, one obtains an inclusion of one of $D_{0,I}(\rho)_{\ovl{\F}_p}$ and $D_{0,II}(\rho)_{\ovl{\F}_p}$ in $\tau'$.
Then either $(\tau')^{I,\chi_1}$ or $(\tau')^{I,\chi_1^s}$ is nonzero.
Applying $\Pi$, we see that they both must be nonzero so that $D_{0,\sigma_\emptyset}(\rho)_{\ovl{\F}_p}$ and $D_{0,\sigma_{\{2\} }}(\rho)_{\ovl{\F}_p}$ are both in $\tau'$.
Repeating the earlier argument, we have that $D_0(\rho)_{\ovl{\F}_p}\subset \tau'$.
Since $\tau_{\ovl{\F}_p}$ is generated by $D_0(\rho)_{\ovl{\F}_p}$, we have that $\tau' = \tau_{\ovl{\F}_p}$.
\end{proof}

The following is the main result of this section.

\begin{thm}\label{thm:irr}
If $\lambda_0\in \F_q$ and $\lambda_i \neq \lambda_0$ for all $i \neq 0$, then any $G$-representation $\pi$ satisfying the hypotheses in Theorem \ref{thm:con} is irreducible over $\ovl{\F}_p$.
If moreover the $\F_q$-span of $(\lambda_i)_i$ is $\ovl{\F}_p$, then $\pi$ is irreducible as a $G$-representation over $\F_q$.
\end{thm}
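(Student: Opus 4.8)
The plan is to adapt the argument of Theorem \ref{thm:BPirr} to the infinite diagram $D(\lambda)$, keeping track of which copies $D_{0,i}$ of $D_0(\rho)$ appear in a given subrepresentation. Let $\pi' \subset \pi$ (or $\pi'\subset\pi_{\ovl{\F}_p}$ for the absolute statement) be a nonzero $G$-subrepresentation. Since $\soc_K \pi = \soc_K D_0 = \oplus_{i} \soc_K D_{0,i}$ and each summand is multiplicity free, there is some $i$ and some $J$ with $\Hom_K(\sigma_J, \pi')\ne 0$ landing in the $i$-th copy. The key propagation step is the analogue of \cite[Lemma 19.7]{BP}: if $\sigma_{J}$ embeds into $\pi'$ inside $D_{0,i}$, then applying an appropriate element of $\F_q[G]$ (the operator built from $\tld S$ and $\tld\Pi$) produces a nonzero vector in $(\soc_K D_{0,i'})^{I_1}$ for the neighbor $i'$ dictated by $\tld\Pi$, hence $D_{0,\sigma_{\delta(J)},i'}(\rho) \subset \pi'$. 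So one first runs the $\delta$-cycle argument within a fixed copy to see that $\pi'$ contains all of $D_{0,I}(\rho)$ or all of $D_{0,II}(\rho)$ inside \emph{some} copy $D_{0,i}$; then one crosses between copies using the nontrivial shifts.

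The first obstacle/subtlety is that the three ``twisted'' characters $\chi_+,\chi_-,\chi_1$ behave differently: passing through $\chi_+$ shifts $i\mapsto i+1$, through $\chi_-$ shifts $i\mapsto i-1$, and through $\chi_1$ multiplies by $\lambda_i$ but stays in copy $i$. The point is that these twisted characters occur \emph{inside} the $\delta$-orbits — $\chi_1$ sits in $D_{0,\sigma_{\{2\}}}(\rho)$, which is part of $D_{0,I}(\rho)$, while $\chi_+,\chi_-$ sit in $D_{0,\sigma_{\{1\}}}(\rho)$ and $D_{0,\sigma_{\{0,1\}}}(\rho)$, part of $D_{0,II}(\rho)$. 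So I would argue: once $D_{0,I}(\rho)\subset\pi'$ in copy $i$, the pair of $\chi_1$-eigenvectors (in copies as forced by $\tld\Pi$, i.e.\ using $\lambda_i$) together with the $\Pi$-action forces $D_{0,\sigma_\emptyset}(\rho)$ and $D_{0,\sigma_{\{2\}}}(\rho)$ into $\pi'$ in copy $i$, and then the $\delta$-cycle gives all of $D_0(\rho)$ in copy $i$; similarly starting from $D_{0,II}(\rho)$. The crucial use of the hypothesis is: having $D_{0,\sigma_{\{1\}}}(\rho)\subset\pi'$ in copy $i$ (which we now do), apply $\tld\Pi$ to its $\chi_+$-eigenvector to land in copy $i+1$, and apply $\tld\Pi$ to the $\chi_-$-eigenvector of $D_{0,\sigma_{\{0,1\}}}(\rho)$ in copy $i$ to land in copy $i-1$. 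Iterating, $\pi'$ contains $D_{0,i'}(\rho)$ for every $i'\in\Z$, hence $\pi'\supset D_0$, hence $\pi'=\pi$ since $\pi$ is generated by $D_0$.

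For the second assertion I would use the hypothesis $\lambda_0\in\F_q$, $\lambda_i\ne\lambda_0$ for $i\ne 0$ to pin down the base copy: in $\pi$ over $\F_q$, after extending scalars any nonzero $G$-subrepresentation must, by the above, contain some $D_{0,i'}(\rho)_{\ovl{\F}_p}$, but one checks that the $\F_q$-rational structure together with the distinctness of the $\lambda_i$ rigidifies the propagation so that in fact all copies are swept; concretely, the $\F_q$-linear map $\tld\Pi$ mixes the copies via the $\lambda_i$, and the condition that the $\F_q$-span of $(\lambda_i)$ is $\ovl{\F}_p$ guarantees that no proper $\F_q[G]$-submodule of $\pi$ can be stable — any such submodule's image under the family of maps indexed by the $\lambda_i$ would have to be closed under multiplication by all $\lambda_i$, hence under all of $\ovl{\F}_p$, forcing it to absorb the $\ovl{\F}_p$-span and contradict properness over $\F_q$. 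I expect the main obstacle to be precisely the bookkeeping in this last step: making rigorous the interaction between the $\F_q$-rational structure, the operator $\tld\Pi$ with its $\lambda_i$-twists, and the $K$-socle control, so that ``$\F_q$-span of $(\lambda_i)$ is $\ovl{\F}_p$'' translates cleanly into irreducibility over $\F_q$ rather than merely over $\ovl{\F}_p$.
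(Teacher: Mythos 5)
There is a genuine gap at the very first step, and it is precisely the point where the hypothesis $\lambda_i\neq\lambda_0$ must enter. You assert that since $\soc_K\pi=\oplus_i\soc_K D_{0,i}$ and ``each summand is multiplicity free,'' a nonzero map $\sigma_J\to\pi'$ can be taken to land in a single copy $D_{0,i}$. This is false: each $\sigma_J$ occurs once in \emph{every} $D_{0,i}$, hence with infinite multiplicity in $\soc_K D_0$, so the image of a nonzero $K$-map $\sigma_J\to\pi'$ is in general a ``diagonal'' copy $\bigl(\sum_i c_i\iota_i\bigr)(\sigma_J)$ for some finitely supported tuple $(c_i)_i$, not $\iota_i(\sigma_J)$ for a single $i$. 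Once you grant yourself a single-copy embedding, the rest of your propagation argument (the $\delta$-cycle within a copy, then crossing copies via the $\chi_+,\chi_-$ shifts) would prove irreducibility for \emph{arbitrary} $\lambda$, which cannot be right: if all $\lambda_i$ are equal, the diagram admits a shift automorphism $\iota_i\mapsto\iota_{i+1}$ commuting with $\tld{\Pi}$, and (for instance) the image of $\mathrm{id}+\mathrm{shift}$ generates a proper subrepresentation. So the theorem's main hypothesis is never actually used in your argument for the first assertion.

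The missing ingredient is a reduction from diagonal copies to a single copy. The paper first propagates a diagonal copy through the whole diagram (its Lemma \ref{lemma:circle}: if $\bigl(\sum_i d_i\iota_i\bigr)(D_{0,\sigma}(\rho))$ meets $\pi'$ nontrivially then $\bigl(\sum_i d_i\iota_{i+j}\bigr)(D_0(\rho))\subset\pi'$ for all $j$), and then takes $(c_i)_i$ of minimal support. Using eigenvectors $v^1\in D_1(\rho)^{\chi_1}$, $v^2\in D_1(\rho)^{\chi_2}$ with $(S\Pi)^2v^1=\mu S\Pi v^2$, the twisted action gives $(\tld{S}\tld{\Pi})^2v^1_i=\lambda_i\mu\tld{S}\tld{\Pi}v^2_i$, so the combination $\sum_i c_i\bigl((\tld{S}\tld{\Pi})^2v^1_i-\lambda_0\mu\tld{S}\tld{\Pi}v^2_i\bigr)=\sum_i(\lambda_i-\lambda_0)c_i\mu\tld{S}\tld{\Pi}v^2_i$ lies in $\pi'$ and has support of size $\#(c_i)_i-1$, exactly because $\lambda_i-\lambda_0\neq0$ for $i\neq0$; minimality forces $\#(c_i)_i=1$. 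Your sketch of the second assertion also remains too vague to assess (``rigidifies the propagation''), but it cannot be repaired without first fixing the single-copy reduction; once that is in place, the $\F_q$-statement follows by applying $\tld{\Pi}$ to the $\chi_1$-eigenvector in copy $j$ to rescale the constant $c$ by $\lambda_j$ and using that the $c\lambda_j$ span $\ovl{\F}_p$ over $\F_q$.
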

\begin{proof}
Let $\pi'$ be a nonzero $G$-subrepresentation of $\pi$ seen as a representation over $\F_q$ by restriction of scalars.
Since $\soc_K\pi' \subset D_0$, there exists $\sigma\in \mathcal{D}(\rho)$ such that $\Hom_K(\sigma,\pi')$ is nonzero.
Then there exists a $(c_i)_i$ in $\oplus_{i\in \Z} \ovl{\F}_p$ such that 
\[
\Big( \sum_i c_i \iota_i\Big) (D_{0,\sigma }(\rho)) \cap \pi' \neq 0.
\]

\begin{lemma} \label{lemma:circle}
Suppose that $\sigma\in \mathcal{D}(\rho)$ and $(d_i)_i\in \oplus_{i\in \Z} \ovl{\F}_p$ are elements such that 
\[
\Big( \sum_i d_i \iota_i\Big) (D_{0,\sigma}(\rho)) \cap \pi' \neq 0.
\]
Then for any $j\in \Z$, 
\[
\Big( \sum_i d_i \iota_{i+j}\Big) (D_0(\rho)) \subset \pi'.
\]
\end{lemma}
\begin{proof}
We assume that $\sigma$ is $\sigma_\emptyset$, as the other cases are similar.
Then as in the proof of Theorem \ref{thm:BPirr}, we see from repeatedly applying $\tld{S}\tld{\Pi}$ that 
\[
\Big( \sum_i d_i \iota_{i+j}\Big) (D_{0,II}(\rho)) \subset \pi'
\]
for $j > 0$.
Since for each $j > 0$, we have that 
\[
\Big( \sum_i d_i \iota_{i+j}\Big) (D_{0,II}(\rho)^{\chi_2^s}) \subset \pi',
\]
we have that 
\[
\Big( \sum_i d_i \iota_{i+j}\Big) (D_{0,I}(\rho)^{\chi_2}) \subset \pi'
\]
for $j>0$.
Again repeatedly applying $\tld{S}\tld{\Pi}$, we see that
\[
\Big( \sum_i d_i \iota_{i+j}\Big) (D_{0,I}(\rho)) \subset \pi'
\]
for all $j\in \Z$.
Then since
\[
\Big( \sum_i d_i \iota_{i+j}\Big) (D_{0,I}(\rho)^{\chi_2}) \subset \pi'
\]
for all $j\in \Z$, we have that 
\[
\Big( \sum_i d_i \iota_{i+j}\Big) (D_{0,II}(\rho)^{\chi_2^s}) \subset \pi'
\]
for all $j\in \Z$.
We conclude that 
\[
\Big( \sum_i d_i \iota_{i+j}\Big) (D_{0,II}(\rho)) \subset \pi'
\]
for all $j\in \Z$ by again repeatedly applying $\tld{S}\tld{\Pi}$.
\end{proof}

In the proof of the next lemma, we will use the following notation.
For $(d_i)_i \in  \oplus_{i\in \Z} \ovl{\F}_p$, let $\#(d_i)_i$ be the cardinality of $\{i\in \Z|d_i\neq 0\}$.

\begin{lemma}\label{lemma:one}
There is a nonzero constant $c \in \ovl{\F}_p$ such that $c\iota_0(D_{0,\sigma_{\{0\} }}(\rho)) \subset \pi'$.
\end{lemma}
\begin{proof}
Fix nonzero elements $v^1 \in D_1(\rho)^{\chi_1}$ and $v^2 \in D_1(\rho)^{\chi_2}$.
One checks that $(S \Pi)^2 v^1$ and $S \Pi v^2$ are nonzero elements in $\sigma_{\{ 0 \} }^{I_1} \subset D_1(\rho)$ using the definition of $\chi_1$ and $\chi_2$ and Proposition \ref{prop:cycle}.
Thus, there exists a scalar $\mu\in \F_q^\times$ such that 
\[ (S \Pi)^2 v^1 = \mu S \Pi v^2.\]
Then by the definition of the action of $\Pi$ on $D_1$, we have that 
\[(\tld{S} \tld{\Pi})^2 v^1_i = \lambda_i \mu \tld{S} \tld{\Pi} v^2_i\]
for all $i\in \Z$.

By Lemma \ref{lemma:circle}, there exists a nonzero $(c_i)_i$ in $\oplus_{i\in \Z} \ovl{\F}_p$ such that 
\[
\Big( \sum_i c_i \iota_i\Big) D_0(\rho) \subset \pi'.
\]
Assume that $\#(c_i)_i$ is minimal among such elements of $\oplus_{i\in \Z} \ovl{\F}_p$.
It suffices to show that $\#(c_i)_i$=1 by Lemma \ref{lemma:circle}.
By Lemma \ref{lemma:circle}, we can also assume that $c_0$ is nonzero.

Since $\sum_i c_i v_i^1$ and $\sum_i c_iv_i^2$ are in $\pi'$, then by the first paragraph, we have that
\[
\sum_i c_i ((\tld{S} \tld{\Pi})^2 v^1_i - \lambda_0\mu \tld{S}\tld{\Pi} v^2_i) = \sum_i (\lambda_i-\lambda_0) c_i\mu \tld{S}\tld{\Pi} v^2_i 
\]
is in $\pi'$, using that $\lambda_0\in \F_q$.
We see from Lemma \ref{lemma:circle} that 
\[
\Big( \sum_i c'_i \iota_i\Big) D_{0,\sigma_{\{0\} } }(\rho) \cap \pi' \neq 0
\]
for $c_i' = (\lambda_i-\lambda_0)c_i$.
Since the $\lambda_i \neq \lambda_0$ for $i\neq 0$ and $c_0\neq 0$, $\#(c'_i)_i = \#(c_i)-1$.
Since we assumed that $\#(c_i)_i$ is minimal, we must have that $\#(c_i)_i = 1$.
\end{proof}

We now complete the proof of Theorem \ref{thm:irr}.
By Lemma \ref{lemma:circle}, it suffices to show that $c$ in Lemma \ref{lemma:one} can be taken to be any element of $\ovl{\F}_p^\times$.
If $\pi'$ is a subrepresentation of $\pi$ over $\ovl{\F}_p$, this is clear.
Now assume that the $\F_q$-span of $(\lambda_i)_i$ is $\ovl{\F}_p$.
By Lemma \ref{lemma:circle}, $c\iota_j (D_0(\rho)^{\chi_1}) \subset \pi'$ for all $j\in \Z$.
By applying $\tld{\Pi}$ to $c\iota_j (D_0(\rho)^{\chi_1})$, we see that $c$ can be taken to be $c\lambda_j$ for all $j\in \Z$.
Since $(c\lambda_i)_i$ spans $\ovl{\F}_p$ over $\F_q$, we are done.
\end{proof}

Note that since $D_0$ is not admissible, any $\pi$ as in Theorem \ref{thm:con} is not admissible.
Taking $\lambda\in \prod_{i\in \Z} \F_q^\times$, Theorem \ref{thm:irr} implies Theorem \ref{thm:main} by taking the $\F_q$-model of $\pi$ constructed in Theorem \ref{thm:con}.
Since the endomorphisms of any such $\pi$ must contain $\ovl{\F}_p$, taking $(\lambda_i)_i$ to span $\ovl{\F}_p$ over $\F_q$ and restricting scalars of $\pi$ to $\F_q$, Theorem \ref{thm:irr} implies Theorem \ref{thm:schur}.

\bibliographystyle{amsalpha}
\bibliography{Nonadmissible}

\end{document}